

\documentclass[10pt]{amsart}
\usepackage{enumerate,amsmath,amssymb,latexsym,
amsfonts, amsthm, amscd, MnSymbol}


\setlength{\textwidth}{14.5cm}
\setlength{\textheight}{22cm}

\hoffset=-2.0cm
\voffset=-.8cm


\theoremstyle{plain}

\newtheorem{theorem}{Theorem}

\numberwithin{equation}{section}

\newcommand{\R}{\mathbb{R}}

\newcommand{\kk}{\kappa}

\newcommand{\rms}{{\rm s}}
\newcommand{\rmc}{{\rm c}}


\begin{document}

\title {Hypergeometric identities in elliptic signature six}

\date{}

\author[P.L. Robinson]{P.L. Robinson}

\address{Department of Mathematics \\ University of Florida \\ Gainesville FL 32611  USA }

\email[]{paulr@ufl.edu}

\subjclass{} \keywords{}

\begin{abstract}

Within the Ramanujan theories of elliptic functions, Li-Chien Shen constructed natural elliptic functions in signature three and signature four. When applied in signature six, the same constructions produce non-elliptic functions that nevertheless engender the corresponding hypergeometric identities of Ramanujan. 

\end{abstract}

\maketitle

\medbreak

\section*{Introduction}

\medbreak 

In his second notebook, Ramanujan laid the foundations for his theory of elliptic functions to alternative bases, in which the hypergeometric function $F(\tfrac{1}{2}, \tfrac{1}{2}; 1 ; \bullet)$ familiar from classical elliptic function theory is replaced by one of three others, in what are now known as the theories of signature three, signature four, and signature six. See [1] for a substantial account of these developments. 

\medbreak 

The signature three theory involves $F(\tfrac{1}{3}, \tfrac{2}{3}; 1 ; \bullet)$. In this case, Shen [9] showed the presence of a certain elliptic function ${\rm dn}_3$ within the theory;  as one of several byproducts, his investigations yielded a fresh derivation of the Borwein cubic identity. Later, the elliptic function ${\rm dn}_3$ was somewhat differently derived in [4] and considered at greater depth in [7]. 

\medbreak 

The signature four theory involves $F(\tfrac{1}{4}, \tfrac{3}{4}; 1 ; \bullet)$. In this case, Shen [11] again showed the presence of an elliptic function ${\rm dn}_2$ within the theory; a new derivation of some hypergeometric identities of Ramanujan was a byproduct of his investigations. Subsequently, the elliptic function ${\rm dn}_2$ received an alternative treatment, briefly in [5] and more fully in [8]. 

\medbreak 

The signature six theory involves $F(\tfrac{1}{6}, \tfrac{5}{6}; 1 ; \bullet)$. In this case, Shen [10] brought an elliptic function into the theory, but by an entirely different mechanism, again offering a new perspective on certain hypergeometric identities of Ramanujan. In [6] we showed that the approaches adopted by Shen in signature three and signature four yield functions that are not elliptic when adopted in signature six. Our chief purpose in this brief paper is complementary: to note that, though non-elliptic, these functions still lead to the same hypergeometric identities of Ramanujan; we also connect these non-elliptic functions to the very differently sourced elliptic functions of Shen in this signature. 

\medbreak 

\section*{Signature six}

\medbreak 

Throughout, our focus will be on real-valued functions of a real variable. 

\medbreak 

Fix a choice $\kk \in (0, 1)$ of `modulus'. We define a function $f: \R \to \R$ by requiring that if $T \in \R$ then 
$$f(T) = \int_0^T F(\tfrac{1}{6}, \tfrac{5}{6}; \tfrac{1}{2} ; \kk^2 \sin^2 t)\, {\rm d} t.$$ 
As the continuous (indeed, analytic) integrand here is even and strictly positive, the function $f$ is odd and strictly increasing; $f$ is also surjective, as the integrand is periodic. Thus $f$ has an odd inverse function, which we denote by $\phi : \R \to \R$: so if $u \in \R$ then 
$$u = \int_0^{\phi (u)} F(\tfrac{1}{6}, \tfrac{5}{6}; \tfrac{1}{2} ; \kk^2 \sin^2 t)\, {\rm d} t.$$ 

\medbreak 

We now define an odd function $\rms : \R \to \R$ by following $\phi$ with the sine function, thus 
$$\rms = \sin \circ \, \phi;$$ 
we may also similarly define an even function $\rmc : \R \to \R$ by 
$$\rmc = \cos \circ \, \phi.$$ Of course, these two functions satisfy the `Pythagorean' identity 
$$\rmc^2 + \rms^2 = 1$$ 
along with 
$$\rms(0) = 0 \; \; {\rm and} \; \; \rmc(0) = 1.$$ 
The functions $\rms$ and $\rmc$ are counterparts to the Jacobian functions ${\rm sn}$ and ${\rm cn}$ familiar from the classical theory of elliptic functions. 
However, neither $\rms$ nor $\rmc$ admits an elliptic extension: see [6] for this and more. 
\medbreak 

As an analogue of the classical complete elliptic integral, we introduce 
$$K = f(\tfrac{1}{2} \pi) = \int_0^{\frac{1}{2} \pi}  F(\tfrac{1}{6}, \tfrac{5}{6}; \tfrac{1}{2} ; \kk^2 \sin^2 t)\, {\rm d} t\, ,$$
this being a function of the modulus $\kk$ (rather, of its square). Thus 
$$\phi (K) = \tfrac{1}{2} \pi$$ 
so that  
$$\rms (K) = 1 \; \; {\rm and} \; \; \rmc(K) = 0.$$ 

\medbreak

Not surprisingly, the functions $\rms$ and $\rmc$ are periodic, with $4 K$ as period. As a step towards the establishment of this fact, note first the identity 
$$\int_0^{\pi}  F(\tfrac{1}{6}, \tfrac{5}{6}; \tfrac{1}{2} ; \kk^2 \sin^2 t)\, {\rm d} t = f(\pi) = 2 K.$$ 
In fact, the substitution $t = \tau + \pi$ gives 
$$\int_{\frac{1}{2} \pi}^{\pi}  F(\tfrac{1}{6}, \tfrac{5}{6}; \tfrac{1}{2} ; \kk^2 \sin^2 t)\, {\rm d} t = \int_{- \frac{1}{2} \pi}^0  F(\tfrac{1}{6}, \tfrac{5}{6}; \tfrac{1}{2} ; \kk^2 \sin^2 \tau)\, {\rm d} \tau = K$$
since $\sin (\tau + \pi) = - \sin \tau$ and the integrand is even. 
\medbreak 

\begin{theorem} \label{2 K}
If $u \in \R$ then $\rms(u + 2 K) = - \rms (u)$ and $\rmc(u + 2K) = - \rmc(u).$
\end{theorem}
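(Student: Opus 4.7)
The plan is to reduce everything to a translation identity for $\phi$. Concretely, I aim to show that
$$\phi(u + 2K) = \phi(u) + \pi \qquad (u \in \R),$$
after which the two desired identities follow immediately by applying $\sin$ and $\cos$ and using $\sin(\theta + \pi) = -\sin \theta$ and $\cos(\theta + \pi) = -\cos\theta$.

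To obtain the displayed translation identity I would argue on the level of $f = \phi^{-1}$. The key observation is that the integrand $F(\tfrac{1}{6}, \tfrac{5}{6}; \tfrac{1}{2}; \kk^2 \sin^2 t)$ depends on $t$ only through $\sin^2 t$ and so is $\pi$-periodic. Hence, for any $T \in \R$, the substitution $t = \tau + \pi$ gives
$$\int_{\pi}^{T + \pi} F(\tfrac{1}{6}, \tfrac{5}{6}; \tfrac{1}{2}; \kk^2 \sin^2 t)\, {\rm d} t = \int_{0}^{T} F(\tfrac{1}{6}, \tfrac{5}{6}; \tfrac{1}{2}; \kk^2 \sin^2 \tau)\, {\rm d} \tau = f(T),$$
from which (by additivity of the integral) $f(T + \pi) = f(\pi) + f(T) = 2K + f(T)$, using the identity $f(\pi) = 2K$ established in the paragraph immediately preceding the theorem. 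Setting $T = \phi(u)$ and applying $\phi$ to both sides then yields $\phi(u + 2K) = \phi(u) + \pi$.

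The final step is the mechanical one: $\rms(u + 2K) = \sin(\phi(u) + \pi) = -\sin \phi(u) = -\rms(u)$, and likewise for $\rmc$. There is no real obstacle here: the whole argument rests on the $\pi$-periodicity of $\sin^2$, which transfers cleanly from the integrand of $f$ to a translation identity for $f$ and hence for its inverse $\phi$, and the trigonometric $\pi$-antiperiodicity of $\sin$ and $\cos$ does the rest.
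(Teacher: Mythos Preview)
Your proof is correct and follows essentially the same route as the paper's: both establish $f(T+\pi)=2K+f(T)$ via the $\pi$-periodicity of the integrand (using the shift $t=\tau+\pi$) together with $f(\pi)=2K$, then set $T=\phi(u)$ to deduce $\phi(u+2K)=\phi(u)+\pi$, and finish by composing with $\sin$ and $\cos$.
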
 

\begin{proof} 
Let $T = \phi (u)$. Splitting the integral, 
$$f(\pi  + T) = \int_0^{\pi}  F(\tfrac{1}{6}, \tfrac{5}{6}; \tfrac{1}{2} ; \kk^2 \sin^2 t)\, {\rm d} t + \int_{\pi}^{\pi + T}  F(\tfrac{1}{6}, \tfrac{5}{6}; \tfrac{1}{2} ; \kk^2 \sin^2 t)\, {\rm d} t.$$ 
On the right, the first integral is $2 K$ (as seen above) while the second reduces to $f(T)$ after a $\pi$ shift in the variable. Thus 
$$f(\pi + T) = 2 K + f(T) = 2 K + u$$
and so 
$$\phi (2 K + u) = \pi + T= \pi + \phi(u).$$
The first identity of the Theorem follows upon applying the sine:  
$$\rms (2 K + u) = \sin \circ \, \phi (2 K + u) = \sin (\pi + \phi (u)) = - \sin ( \phi (u) ) = - \rms (u);$$ 
the second follows upon application of the cosine. 
\end{proof} 

\medbreak 

\begin{theorem} \label{4K}
The functions $\rms$ and $\rmc$ have period $4 K$. 
\end{theorem}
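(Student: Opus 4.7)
The plan is to apply Theorem \ref{2 K} twice and let the signs cancel. Concretely, for any $u \in \R$ I would write $u + 4K = (u + 2K) + 2K$ and invoke the previous theorem with argument $u + 2K$ in place of $u$ to obtain
$$\rms(u + 4K) = \rms((u+2K) + 2K) = -\rms(u + 2K) = -(-\rms(u)) = \rms(u),$$
and identically with $\rmc$ in place of $\rms$.

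The only thing to verify is that there is no smaller issue to check: no continuity, differentiability, or integral manipulations are needed here, since everything has been absorbed into the preceding theorem. The one very minor point worth mentioning explicitly is that $4K$ is the period, not merely a period; but this also follows quickly from Theorem \ref{2 K}, since $\rms(2K) = -\rms(0) = 0$ and $\rmc(2K) = -\rmc(0) = -1 \neq 1$, so $2K$ cannot itself be a period, and no intermediate value works either (any period must be a multiple of $2K$ by the sign-flip identity, combined with the fact that $\rms$ is strictly monotone on $[-K,K]$ as the composition of $\sin$ with the strictly increasing $\phi$).

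Since the paper only claims that $4K$ \emph{is} a period, I expect the author's proof to consist of the one-line calculation above for each of $\rms$ and $\rmc$. There is no substantive obstacle.
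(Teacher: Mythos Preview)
Your proposal is correct and matches the paper's proof essentially line for line: the paper lets $g$ stand for either $\rms$ or $\rmc$ and writes $g(u+4K) = g(u+2K+2K) = -g(u+2K) = -(-g(u)) = g(u)$. Your additional remarks on minimality go beyond what the paper proves, but you correctly anticipated that the author only establishes that $4K$ is \emph{a} period.
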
 

\begin{proof} 
Let $g$ stand for $\rms$ or $\rmc$. If $u \in \R$ then 
$$g (u + 4 K) = g (u + 2 K + 2 K) = - g (u + 2 K) = - ( -g (u)) = g (u).$$ 
\end{proof} 

\medbreak  

The `complete integral' $K = f(\tfrac{1}{2} \pi)$ itself has a familiar value in hypergeometric terms. 

\medbreak 

\begin{theorem} \label{K}
$K = \tfrac{1}{2} \pi \, F(\tfrac{1}{6}, \tfrac{5}{6}; 1 ; \kk^2).$
\end{theorem}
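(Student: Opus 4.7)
The plan is to expand the hypergeometric integrand as a power series and integrate term by term, using the classical Wallis formula in its Pochhammer form. Specifically, writing
\[
F(\tfrac{1}{6}, \tfrac{5}{6}; \tfrac{1}{2}; \kk^2 \sin^2 t) = \sum_{n=0}^{\infty} \frac{(1/6)_n (5/6)_n}{(1/2)_n \, n!} \, \kk^{2n} \sin^{2n} t,
\]
and noting that this series converges uniformly in $t \in [0, \tfrac{1}{2}\pi]$ because $\kk^2 \sin^2 t \leq \kk^2 < 1$, I can exchange the sum and the integral.

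Next I would invoke the standard evaluation
\[
\int_0^{\pi/2} \sin^{2n} t \, {\rm d} t = \frac{\pi}{2} \cdot \frac{(1/2)_n}{n!},
\]
which recasts $K$ as
\[
K = \frac{\pi}{2} \sum_{n=0}^{\infty} \frac{(1/6)_n (5/6)_n}{(1/2)_n \, n!} \cdot \frac{(1/2)_n}{n!} \, \kk^{2n}.
\]
The decisive cancellation is that the $(1/2)_n$ factor from the original hypergeometric coefficients is exactly undone by the $(1/2)_n$ from the Wallis integrals, leaving $n! = (1)_n$ in its place. Recognising the resulting series as $F(\tfrac{1}{6}, \tfrac{5}{6}; 1; \kk^2)$ yields the claimed identity.

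There is no serious obstacle here; the only technical point to mention is the justification of the term-by-term integration, which is immediate from the uniform (and indeed absolute) convergence guaranteed by $\kk \in (0,1)$. The content of the proof is really an observation about how the denominator parameter $\tfrac{1}{2}$ in the integrand interacts with the Wallis formula to promote the last parameter of the hypergeometric function from $\tfrac{1}{2}$ to $1$.
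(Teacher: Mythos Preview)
Your proof is correct and follows exactly the approach sketched in the paper: expand the hypergeometric integrand, integrate term by term, and invoke the Wallis integral. Your version is in fact more explicit than the paper's, which merely states the Wallis formula in the form $\int_0^{\pi/2}\sin^{2n}t\,{\rm d}t = \tfrac{1}{2}\pi\,(2n)!/(2^n n!)^2$ without spelling out the cancellation; writing this as $\tfrac{1}{2}\pi\,(1/2)_n/n!$ as you do makes the replacement of the parameter $\tfrac{1}{2}$ by $1$ transparent.
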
 

\begin{proof} 
Expand the hypergeometric integrand and integrate the resulting series term-by-term, using the standard integral formula 
$$\int_0^{\frac{1}{2} \pi} \sin^{2 n} t \, {\rm d} t = \tfrac{1}{2} \pi \, \tfrac{(2 n)!}{(2^n n!)^2}.$$ 
\end{proof} 

\medbreak 

Up until now, the hypergeometric parameters need not have been $1/6$ and $5/6$; henceforth, we shall require the parameters to have these precise values. 

\medbreak 

An alternative explicit integral formula for $K$ stems from the hypergeometric evaluation 
$$F(\tfrac{1}{6}, \tfrac{5}{6}; \tfrac{1}{2} ; \sin^2 \psi) = \frac{\cos \frac{2}{3} \psi}{\cos \psi}$$ 
for which we refer to page 101 in Volume 1 of the Bateman Manuscript Project [2]. 

\medbreak 

For the following, let $\alpha \in (0, \frac{1}{2} \pi)$ be the `modular' angle defined by 
$$\kk = \sin \alpha.$$ 

\medbreak 

\begin{theorem} \label{integral}
$$K = \sqrt{2} \, \int_0^{\alpha} \frac{\cos \frac{2}{3} \psi}{\sqrt{\cos 2 \psi - \cos 2 \alpha}} \, {\rm d} \psi.$$
\end{theorem}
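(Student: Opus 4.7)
The plan is to perform the substitution $\sin \psi = \kk \sin t$ in the defining integral for $K$. Under this change of variable, the hypergeometric argument $\kk^2 \sin^2 t$ becomes $\sin^2 \psi$, which activates the Bateman evaluation and replaces the hypergeometric factor in the integrand by the closed form $\cos(\tfrac{2}{3}\psi)/\cos \psi$. The limits match up correctly since $\kk = \sin \alpha$: as $t$ runs from $0$ to $\tfrac{1}{2}\pi$, $\psi$ runs from $0$ to $\alpha$.

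Next I would compute the Jacobian. Differentiating $\sin \psi = \sin \alpha \, \sin t$ yields $\cos \psi \, \rd \psi = \sin \alpha \, \cos t \, \rd t$, and since $\cos t = \sqrt{1 - \sin^2 \psi/\sin^2 \alpha}$, this rearranges to
$$\rd t = \frac{\cos \psi}{\sqrt{\sin^2 \alpha - \sin^2 \psi}} \, \rd \psi.$$
After the substitution, the $\cos \psi$ in the denominator from the Bateman formula cancels the $\cos \psi$ coming from the Jacobian, leaving
$$K = \int_0^{\alpha} \frac{\cos \frac{2}{3}\psi}{\sqrt{\sin^2 \alpha - \sin^2 \psi}} \, \rd \psi.$$

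The final step is a cosmetic trigonometric rewriting: the double-angle identity $\cos 2 \aaa = 1 - 2\sin^2 \aaa$ supplies $\cos 2 \psi - \cos 2 \alpha = 2(\sin^2 \alpha - \sin^2 \psi)$, so $\sqrt{\sin^2 \alpha - \sin^2 \psi} = \sqrt{\cos 2 \psi - \cos 2 \alpha}/\sqrt 2$, and the resulting factor of $\sqrt{2}$ floats out in front to give the stated identity.

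I do not anticipate a genuine obstacle here; the proof reduces to a textbook substitution plus a half-angle identity. The one subtlety worth checking is that the substitution is a bona fide diffeomorphism of $(0,\tfrac{1}{2}\pi)$ onto $(0,\alpha)$ and that the two square-root singularities — of $\cos t$ at $t = \tfrac{1}{2}\pi$ and of $\sqrt{\sin^2 \alpha - \sin^2 \psi}$ at $\psi = \alpha$ — match, so that the transformed improper integral converges to the same value as the original proper one. Both issues are immediate because $\alpha \in (0, \tfrac{1}{2}\pi)$ and the map $\psi \mapsto \sin \psi$ is a smooth bijection on the relevant interval.
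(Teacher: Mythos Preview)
Your proposal is correct and follows essentially the same route as the paper: the substitution $\sin\psi=\kappa\sin t$, the Bateman evaluation $F(\tfrac{1}{6},\tfrac{5}{6};\tfrac{1}{2};\sin^2\psi)=\cos(\tfrac{2}{3}\psi)/\cos\psi$, the Jacobian $\rd t = \cos\psi\,\rd\psi/\sqrt{\kappa^2-\sin^2\psi}$, and the duplication identity $\sin^2\alpha-\sin^2\psi=\tfrac{1}{2}(\cos 2\psi-\cos 2\alpha)$ are exactly the ingredients the paper uses. Your added remark on the endpoint behaviour is a nice piece of hygiene that the paper leaves implicit.
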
 

\begin{proof} 
In the definition 
$$K = \int_0^{\frac{1}{2} \pi}  F(\tfrac{1}{6}, \tfrac{5}{6}; \tfrac{1}{2} ; \kk^2 \sin^2 \varphi)\, {\rm d} \varphi$$ 
change the integration variable from $\varphi \in [0, \frac{1}{2} \pi]$ to $\psi \in [0, \alpha]$ where  
$$\sin \psi = \kk \, \sin \varphi.$$ 
The hypergeometric integrand evaluates as 
$$F(\tfrac{1}{6}, \tfrac{5}{6}; \tfrac{1}{2} ; \kk^2 \sin^2 \varphi) = F(\tfrac{1}{6}, \tfrac{5}{6}; \tfrac{1}{2} ; \sin^2 \psi) = \frac{\cos \frac{2}{3} \psi}{\cos \psi}$$ 
alongside which the change of variable introduces the factor 
$$\frac{{\rm d} \varphi}{{\rm d} \psi} = \frac{\cos \psi}{\kk \cos \varphi} = \frac{\cos \psi}{\sqrt{\kk^2 - \sin^2 \psi}}$$
in which trigonometric duplication gives 
$$\kk^2 - \sin^2 \psi = \sin^2 \alpha - \sin^2 \psi = \tfrac{1}{2} (\cos 2 \psi - \cos 2 \alpha).$$ 
\end{proof} 

\medbreak 

We shall now transform this integral expression for $K$ until it asssumes a decidedly elliptic appearance. 

\medbreak 

\begin{theorem} \label{Kell}
$$K = \sqrt{\frac{3}{2}} \int_{\cos \frac{2}{3} (\pi + \alpha)}^{\cos \frac{2}{3} (\pi - \alpha)} \frac{{\rm d} x}{\sqrt{4x^3 - 3 x - (1 - 2 \kk^2)}} \,.$$
\end{theorem}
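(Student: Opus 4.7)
The plan is to start from the right-hand side of the claimed identity and reduce it to the integral expression for $K$ established in Theorem \ref{integral}. The natural substitution is suggested by the triple-angle identity $\cos 3 \theta = 4 \cos^3 \theta - 3 \cos \theta$: setting $x = \cos \tfrac{2}{3}(\pi - \psi)$ gives $4 x^3 - 3 x = \cos(2 \pi - 2 \psi) = \cos 2 \psi$, and since $1 - 2 \kk^2 = \cos 2 \alpha$, the cubic radical $\sqrt{4 x^3 - 3 x - (1 - 2 \kk^2)}$ collapses to $\sqrt{\cos 2 \psi - \cos 2 \alpha}$.

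For $x$ to sweep monotonically across $[\cos \tfrac{2}{3}(\pi + \alpha), \cos \tfrac{2}{3}(\pi - \alpha)]$, the new variable $\psi$ must range over the \emph{symmetric} interval $[-\alpha, \alpha]$; on this range $\tfrac{2}{3}(\pi - \psi) \in (0, \pi)$, so the differential is unambiguously $\rd x = \tfrac{2}{3} \sin \tfrac{2}{3}(\pi - \psi) \, \rd \psi$. The proposed right-hand side thereby transforms into
$$\sqrt{\tfrac{3}{2}} \cdot \tfrac{2}{3} \int_{-\alpha}^{\alpha} \frac{\sin \tfrac{2}{3}(\pi - \psi)}{\sqrt{\cos 2 \psi - \cos 2 \alpha}} \, \rd \psi = \sqrt{\tfrac{2}{3}} \int_{-\alpha}^{\alpha} \frac{\sin \tfrac{2}{3}(\pi - \psi)}{\sqrt{\cos 2 \psi - \cos 2 \alpha}} \, \rd \psi.$$

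The remaining step is to expand $\sin \tfrac{2}{3}(\pi - \psi) = \tfrac{\sqrt{3}}{2} \cos \tfrac{2 \psi}{3} + \tfrac{1}{2} \sin \tfrac{2 \psi}{3}$ by angle-subtraction. Since the denominator $\sqrt{\cos 2 \psi - \cos 2 \alpha}$ is even in $\psi$, the $\sin \tfrac{2 \psi}{3}$ piece produces an odd integrand that vanishes on $[-\alpha, \alpha]$, whereas the $\cos \tfrac{2 \psi}{3}$ piece gives an even integrand equal to twice its integral on $[0, \alpha]$. Consolidating the constants $\sqrt{2/3} \cdot (\sqrt{3}/2) \cdot 2 = \sqrt{2}$ reproduces $\sqrt{2} \int_0^{\alpha} \cos \tfrac{2 \psi}{3} / \sqrt{\cos 2 \psi - \cos 2 \alpha} \, \rd \psi$, which equals $K$ by Theorem \ref{integral}.

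The main subtlety---and what makes the symmetric extension to $[-\alpha, \alpha]$ essential rather than cosmetic---is that on the half-interval $[0, \alpha]$ alone one cannot express $\cos \tfrac{2 \psi}{3}$ in $x$ without invoking $\sqrt{1 - x^2}$: solving the linear system coming from $x = \cos(\tfrac{2 \pi}{3} - \tfrac{2 \psi}{3})$ and $\sqrt{1 - x^2} = \sin(\tfrac{2 \pi}{3} - \tfrac{2 \psi}{3})$ yields $\cos \tfrac{2 \psi}{3} = \tfrac{1}{2}(\sqrt{3} \sqrt{1 - x^2} - x)$, whose $\sqrt{1 - x^2}$ summand prevents direct reduction to the desired cubic form. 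Passing to $[-\alpha, \alpha]$ repackages this obstruction as the odd term $\tfrac{1}{2} \sin \tfrac{2 \psi}{3}$, which parity then kills---this is the one step that is not pure bookkeeping.
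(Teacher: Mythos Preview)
Your argument is correct and is essentially the paper's own proof run in reverse: the paper starts from Theorem~\ref{integral}, symmetrizes to $[-\alpha,\alpha]$, uses the sine addition formula (with parity killing the odd $\sin\tfrac{2}{3}\psi$ term) to reach $\sin\tfrac{2}{3}(\pi+\psi)$, and then substitutes $x=\cos\tfrac{2}{3}(\pi+\psi)$, whereas you begin at the elliptic integral, substitute $x=\cos\tfrac{2}{3}(\pi-\psi)$, and unwind the same steps back to Theorem~\ref{integral}. The only difference is the direction of travel and the cosmetic replacement $\psi\mapsto -\psi$; the key ingredients---the triple-angle identity for cosine and the parity cancellation on the symmetric interval---are identical.
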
 

\begin{proof} 
First of all, as the cosine function is even, 
$$\sqrt{\frac{2}{3}} K = \frac{1}{\sqrt3}  \int_{- \alpha}^{\alpha} \frac{\cos \frac{2}{3} \psi}{\sqrt{\cos 2 \psi - \cos 2 \alpha}} \, {\rm d} \psi = \frac{2}{3} \sin \frac{2}{3} \pi \int_{- \alpha}^{\alpha} \frac{\cos \frac{2}{3} \psi}{\sqrt{\cos 2 \psi - \cos 2 \alpha}} \, {\rm d} \psi.$$
Next, the addition formula for the (odd) sine function yields 
$$\sqrt{\frac{2}{3}} K = \frac{2}{3} \int_{- \alpha}^{\alpha} \frac{\sin \frac{2}{3} (\pi + \psi)}{\sqrt{\cos 2 \psi - \cos 2 \alpha}} \, {\rm d} \psi$$
whence the substitution $\theta = \pi + \psi$ leads to 
$$\sqrt{\frac{2}{3}} K = \frac{2}{3} \int_{\pi - \alpha}^{\pi + \alpha} \frac{\sin \frac{2}{3} \theta}{\sqrt{\cos 2 \theta - \cos 2 \alpha}} \, {\rm d} \theta.$$
Finally, the substitution $x = \cos \frac{2}{3} \theta$ produces the integral formula 
$$\sqrt{\frac{2}{3}} K = \int_{\cos \frac{2}{3} (\pi + \alpha)}^{\cos \frac{2}{3} (\pi - \alpha)} \frac{{\rm d} x}{\sqrt{4x^3 - 3 x - \cos 2 \alpha}}$$
wherein $\cos 2 \alpha = 1 - 2 \sin^2 \alpha = 1 - 2 \kk^2$. 
\end{proof} 

\medbreak 

The idea of this proof comes straight from Theorem 3.2 in [10], though we have varied its place in the scheme of things. 

\medbreak 

As announced, this integral formula is elliptic in nature. Indeed, let $\wp$ be the Weierstrass function with invariants 
$$g_2 = 3 \; \; {\rm and} \; \; g_3 = 1 - 2 \kk^2$$ 
so that 
$$(\wp ')^2 = 4 \wp^3 - 3 \wp - (1 - 2 \kk^2) = 4 \wp^3 - 3 \wp - \cos 2 \alpha.$$ 
\medbreak 
\noindent
The invariants being real and the discriminant 
$$\Delta = g_2^3 - 27 g_3^2 = 108 \kk^2 (1 - \kk^2)$$ 
being positive, $\wp$ has a rectangular period lattice with a positive fundamental half-period $\omega$. 

\medbreak 

\begin{theorem} \label{omega}
$K = \sqrt{\frac{3}{2}} \, \omega.$
\end{theorem}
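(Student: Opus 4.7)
The plan is to recognize the integral expression for $K$ in Theorem \ref{Kell} as $\sqrt{3/2}$ times the classical Weierstrass integral representation of the real half-period $\omega$, so that the claimed identity $K = \sqrt{3/2}\,\omega$ is immediate.

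The first step is to locate the three real roots of the cubic $4x^3 - 3x - (1-2\kk^2)$. The triple-angle identity $\cos 3\theta = 4\cos^3\theta - 3\cos\theta$, together with $1 - 2\kk^2 = \cos 2\alpha$, converts the equation $4x^3 - 3x = \cos 2\alpha$ under the substitution $x = \cos\theta$ to $\cos 3\theta = \cos 2\alpha$. Three distinct solutions in $[0,\pi]$ are $\theta_1 = \tfrac{2\alpha}{3}$, $\theta_2 = \tfrac{2}{3}(\pi - \alpha)$ and $\theta_3 = \tfrac{2}{3}(\pi + \alpha)$, lying respectively in the intervals $(0, \tfrac{\pi}{3})$, $(\tfrac{\pi}{3}, \tfrac{2\pi}{3})$ and $(\tfrac{2\pi}{3}, \pi)$ when $\alpha \in (0, \tfrac{\pi}{2})$; taking cosines in reverse, since cosine is strictly decreasing on $[0, \pi]$, gives
$$e_1 = \cos\tfrac{2\alpha}{3} > e_2 = \cos\tfrac{2}{3}(\pi - \alpha) > e_3 = \cos\tfrac{2}{3}(\pi + \alpha),$$
so that the integral in Theorem \ref{Kell} is exactly $\int_{e_3}^{e_2} {\rm d}x / \sqrt{4x^3 - 3x - (1-2\kk^2)}$.

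The second step is the standard identification of this integral as $\omega$. I would substitute $x = \wp(u)$ on the horizontal segment joining the purely imaginary half-period to its translate by $\omega$: along this segment $\wp$ is real and monotonically increasing from $e_3$ to $e_2$, so $\wp' = +\sqrt{4\wp^3 - 3\wp - (1-2\kk^2)}$ throughout, and the substitution yields $\int_{e_3}^{e_2} {\rm d}x / \sqrt{4x^3 - 3x - (1-2\kk^2)} = \int_0^{\omega} {\rm d}u = \omega$. Combined with Theorem \ref{Kell} this is the desired $K = \sqrt{3/2}\,\omega$.

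The main obstacle is the careful justification of the sign and monotonicity of $\wp'$ on the chosen horizontal segment; this rests on the well-known rectangular structure of the period lattice when the discriminant is positive. One can either develop this directly from $(\wp')^2 = 4\wp^3 - 3\wp - (1-2\kk^2)$ and the geometry of the lattice, or (more succinctly) simply quote the standard formula $\omega = \int_{e_3}^{e_2} {\rm d}x / \sqrt{4x^3 - g_2 x - g_3}$ from any reference text on Weierstrass elliptic functions. Once the three roots have been pinned down trigonometrically in the first step, the identification $K = \sqrt{3/2}\,\omega$ is mechanical.
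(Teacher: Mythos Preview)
Your proposal is correct and follows essentially the same route as the paper: identify the three roots of the cubic as $e_1 = \cos\tfrac{2}{3}\alpha$, $e_2 = \cos\tfrac{2}{3}(\pi-\alpha)$, $e_3 = \cos\tfrac{2}{3}(\pi+\alpha)$ so that the integral of Theorem~\ref{Kell} runs from $e_3$ to $e_2$, and then invoke the standard half-period formula $\omega = \int_{e_3}^{e_2} {\rm d}x/\sqrt{4x^3 - g_2 x - g_3}$ (the paper cites Greenhill's text for this rather than deriving it via $x = \wp(u)$). Your triple-angle argument for locating and ordering the roots is a welcome elaboration of what the paper merely asserts.
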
 

\begin{proof} 
The roots $e_1 > e_2 > e_3$ of the cubic $4 x^3 - 3 x - (1 - 2 \kk^2)$ are precisely 
$$e_1 = \cos \tfrac{2}{3} \alpha, \; e_2 = \cos \tfrac{2}{3} (\alpha - \pi), \; e_3 = \cos \tfrac{2}{3} (\alpha + \pi).$$
Simply substitute these values into the formula 
$$\omega = \int_{e_3}^{e_2} \frac{{\rm d} x}{\sqrt{4x^3 - 3 x - (1 - 2 \kk^2)}}$$
for which we refer to Section 51 of the classic text [3] by Greenhill. 
\end{proof} 

\medbreak 

Having identified $K$ in classical elliptic terms, we may of course also identify $K$ in terms of the `classical' hypergeometric function $F(\tfrac{1}{2}, \tfrac{1}{2}; 1 ; \bullet)$. 

\medbreak 

For this, it is convenient to rescale the acute modular angle $\alpha$ and define $\beta \in (0, \frac{1}{3} \pi)$ by 
$$3 \beta = 2 \alpha.$$ 

\medbreak 

\begin{theorem} \label{Kclass}
$$K = (1 - k^2 + k^4)^{1/4} \, \tfrac{1}{2} \, \pi \, F(\tfrac{1}{2}, \tfrac{1}{2}; 1 ; k^2)$$
where the (classical) modulus $k$ is given by 
$$k^2 = 2 \, \frac{\sin \beta}{\sin \beta + \sqrt3 \cos \beta}.$$
\end{theorem}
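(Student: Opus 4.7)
The plan is to build on Theorem~\ref{omega}, which identifies $K = \sqrt{3/2}\,\omega$ with $\omega$ the real fundamental half-period of the Weierstrass $\wp$ having invariants $g_2 = 3$ and $g_3 = 1 - 2\kk^2$. The strategy is to pass from this Weierstrass picture to the Jacobi picture via the standard reduction
$$\omega = \frac{\tfrac{1}{2}\pi\,F(\tfrac{1}{2},\tfrac{1}{2};1;k^2)}{\sqrt{e_1 - e_3}}, \qquad k^2 = \frac{e_2 - e_3}{e_1 - e_3},$$
available for instance in Greenhill's text~[3]. This reduces matters to two explicit computations: identifying the modulus $k$, and simplifying the surviving prefactor $\sqrt{3/(2(e_1-e_3))}$.

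For the modulus, I would introduce $\beta = \tfrac{2}{3}\alpha$, so that the roots listed in Theorem~\ref{omega} become $e_1 = \cos\beta$, $e_2 = \cos(\beta - \tfrac{2}{3}\pi)$, and $e_3 = \cos(\beta + \tfrac{2}{3}\pi)$. The sum-to-product identity $\cos A - \cos B = -2\sin\tfrac{A+B}{2}\sin\tfrac{A-B}{2}$ then yields
$$e_2 - e_3 = \sqrt{3}\sin\beta, \qquad e_1 - e_3 = \tfrac{\sqrt{3}}{2}(\sin\beta + \sqrt{3}\cos\beta).$$
Writing $S := \sin\beta + \sqrt{3}\cos\beta$, the quotient is $k^2 = 2\sin\beta/S$, which is the stated modulus.

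For the prefactor, substituting the expression for $e_1 - e_3$ gives
$$K = \sqrt{\tfrac{\sqrt{3}}{S}}\cdot \tfrac{1}{2}\pi\,F(\tfrac{1}{2},\tfrac{1}{2};1;k^2) = \bigl(\tfrac{3}{S^2}\bigr)^{1/4}\cdot\tfrac{1}{2}\pi\,F(\tfrac{1}{2},\tfrac{1}{2};1;k^2),$$
so it suffices to verify the algebraic identity $1 - k^2 + k^4 = 3/S^2$. After clearing denominators this reduces to $S^2 - 2S\sin\beta + 4\sin^2\beta = 3$, which follows in one line from expanding $S^2 = \sin^2\beta + 2\sqrt{3}\sin\beta\cos\beta + 3\cos^2\beta$. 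The main obstacle is precisely this cancellation: none of the individual steps is hard, but the exotic-looking prefactor $(1-k^2+k^4)^{1/4}$ emerges cleanly only once one notices that a quadratic expression in $\sin\beta$ and $\cos\beta$ collapses to the constant $3$.
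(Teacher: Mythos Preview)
Your proof is correct and follows essentially the same route as the paper: both start from Theorem~\ref{omega}, invoke Greenhill's formula $\omega = (e_1-e_3)^{-1/2}\,\tfrac{1}{2}\pi\,F(\tfrac{1}{2},\tfrac{1}{2};1;k^2)$ with $k^2=(e_2-e_3)/(e_1-e_3)$, and then compute the differences $e_i-e_j$ from the explicit roots $\cos\beta,\ \cos(\beta\mp\tfrac{2}{3}\pi)$ to recover the stated modulus. The only point of divergence is in justifying the prefactor: the paper observes the general Weierstrass identity $(e_1-e_3)^2(1-k^2+k^4)=\tfrac{3}{4}g_2$ and then sets $g_2=3$, whereas you verify the equivalent relation $1-k^2+k^4=3/S^2$ by a direct trigonometric expansion in~$\beta$. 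Both are short; the paper's version has the mild advantage of exhibiting $(1-k^2+k^4)^{1/4}$ as an invariant-theoretic quantity rather than the outcome of a fortuitous cancellation.
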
 

\begin{proof} 
Further reference to Section 51 of [3] turns up the following formula for the positive fundamental half-period $\omega$ of $\wp$: 
$$\omega = \sqrt{\frac{1}{e_1 - e_3}} \, K \Big(\frac{e_2 - e_3}{e_1 - e_3}\Big)$$
where $K(k^2)$ is the complete elliptic integral with $k$ as classical modulus, which evaluates hypergeometrically as 
$$K(k^2) = \tfrac{1}{2} \pi \, F(\tfrac{1}{2}, \tfrac{1}{2}; 1 ; k^2).$$ 
The formulae 
$$g_2 = 2 \, ( e_1^2 + e_2^2 + e_3^3) = - 4 \, (e_2 e_3 + e_3 e_1 + e_1 e_2)$$
relating the invariant $g_2$ to the midpoint values are standard: see Section 53 of [3] for example. With 
$$k^2 = \frac{e_2 - e_3}{e_1 - e_3}$$ 
it follows that 
$$(e_1 - e_3)^2 (1 - k^2 + k^4) = ( e_1^2 + e_2^2 + e_3^3) - (e_2 e_3 + e_3 e_1 + e_1 e_2) = \tfrac{3}{4} g_2 = \tfrac{9}{4}$$
here, whence 
$$ \sqrt{\frac{3}{2}} \, \sqrt{\frac{1}{e_1 - e_3}} = (1 - k^2 + k^4)^{1/4}.$$ 
\medbreak
\noindent
In the proof of Theorem \ref{omega} we recorded the midpoint values 
$$e_1 = \cos \beta, \; e_2 = \cos (\beta - \tfrac{2}{3} \pi), \; e_3 = \cos (\beta + \tfrac{2}{3} \pi).$$ 
From these formulae, it follows that $e_1 - e_3 = \sqrt3 \sin(\beta + \tfrac{1}{3} \pi)$ and $e_2 - e_3 = \sqrt3 \sin \beta$, whence 
$$k^2 = \frac{\sin \beta}{\sin(\beta + \tfrac{1}{3} \pi)} = 2 \, \frac{\sin \beta}{\sin \beta + \sqrt3 \cos \beta}.$$
Assemble the pieces to conclude the proof. 
\end{proof} 

\medbreak 

We are now in possession of a hypergeometric identity that relates signature six to the classical signature. Direct comparison of Theorem \ref{K} and Theorem \ref{Kclass} reveals that 
$$F(\tfrac{1}{6}, \tfrac{5}{6}; 1 ; \kk^2) = (1 - k^2 + k^4)^{1/4} \, F(\tfrac{1}{2}, \tfrac{1}{2}; 1 ; k^2)$$
where 
$$\kk = \sin \alpha \; \; {\rm and} \; \; k^2 = 2 \, \frac{\sin \beta}{\sin \beta + \sqrt3 \cos \beta}$$
with 
$$2 \alpha = 3 \beta.$$ 
We can recast this relationship into more familiar form, as the following hypergeometric identity. 

\medbreak 

\begin{theorem} \label{hyper}
For the increasing bijection $(0, 1) \to (0, 1): x \mapsto \xi$ given by 
$$4 \xi (1 - \xi) = \frac{27}{4} \, \frac{x^2 (1 - x)^2}{(1 - x + x^2)^3}$$ 
there holds the identity  
$$F(\tfrac{1}{6}, \tfrac{5}{6}; 1 ; \xi) = (1 - x + x^2)^{1/4} \, F(\tfrac{1}{2}, \tfrac{1}{2}; 1 ; x).$$
\end{theorem}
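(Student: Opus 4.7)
The plan is to recognize that Theorem \ref{hyper} is a direct reformulation of the identity displayed immediately above it, so the task reduces to verifying that the algebraic relation $4\xi(1-\xi) = \tfrac{27}{4}\,x^2(1-x)^2/(1-x+x^2)^3$ is exactly what the trigonometric parametrization $\xi = \sin^2\alpha$, $x = 2\sin\beta/(\sin\beta + \sqrt3\cos\beta)$ with $2\alpha = 3\beta$ imposes, and that the induced map $x \mapsto \xi$ is an increasing bijection of $(0,1)$ onto itself.

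The first reduction I would make is to rewrite $\sin\beta + \sqrt3\cos\beta = 2\sin(\beta + \tfrac{\pi}{3})$, so that $x = \sin\beta/\sin(\beta + \tfrac{\pi}{3})$. A short computation then gives $1 - x = \sin(\tfrac{\pi}{3} - \beta)/\sin(\tfrac{\pi}{3} + \beta)$. The heart of the proof is then two elementary trigonometric observations: the classical factorization
$$\sin 3\beta = 4\sin\beta \, \sin(\tfrac{\pi}{3} - \beta)\, \sin(\tfrac{\pi}{3} + \beta),$$
and the identity
$$1 - x + x^2 = \frac{3}{4\sin^2(\beta + \tfrac{\pi}{3})},$$
which I would check by expanding $\sin^2(\beta + \tfrac{\pi}{3}) - \sin\beta\sin(\beta + \tfrac{\pi}{3}) + \sin^2\beta$ and watching the cross terms cancel to leave $\tfrac{3}{4}(\sin^2\beta + \cos^2\beta)$. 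With these in hand, one side of the target relation is $4\xi(1-\xi) = \sin^2(2\alpha) = \sin^2(3\beta)$, and the other side becomes, after substitution, $16\sin^2\beta\sin^2(\tfrac{\pi}{3} - \beta)\sin^2(\tfrac{\pi}{3} + \beta)$; these agree by the factorization.

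For the bijection statement, I would let $\beta$ run over $(0, \tfrac{\pi}{3})$. Then $\alpha = \tfrac{3}{2}\beta$ runs over $(0, \tfrac{\pi}{2})$, so $\xi = \sin^2\alpha$ is an increasing bijection onto $(0,1)$. In the other direction, differentiating $x = \sin\beta/\sin(\beta + \tfrac{\pi}{3})$ gives $dx/d\beta = \sin(\tfrac{\pi}{3})/\sin^2(\beta + \tfrac{\pi}{3}) > 0$, with $x \to 0$ as $\beta \to 0$ and $x \to 1$ as $\beta \to \tfrac{\pi}{3}$. Composing these two monotone bijections yields the increasing bijection $x \mapsto \xi$ of $(0,1)$ onto itself described in the statement. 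Finally, since $x = k^2$, the prefactor $(1 - k^2 + k^4)^{1/4}$ in the earlier identity is literally $(1 - x + x^2)^{1/4}$, and the identity of Theorem \ref{Kclass} becomes the asserted identity.

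No serious obstacle is present; the only delicate point is the algebraic verification that $1 - x + x^2$ simplifies so neatly to $3/(4\sin^2(\beta + \tfrac{\pi}{3}))$, which is what makes the whole identity collapse. I would treat this as the keystone computation and present it first, since once it is in place everything else is bookkeeping.
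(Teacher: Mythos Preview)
Your proposal is correct and follows essentially the same approach as the paper: both reduce the theorem to the identity already established from Theorems \ref{K} and \ref{Kclass}, and both verify the algebraic relation $4\xi(1-\xi) = \tfrac{27}{4}\,x^2(1-x)^2/(1-x+x^2)^3$ via the trigonometric parametrization with $2\alpha = 3\beta$. The only cosmetic difference is that the paper computes $\sin^2\beta = \tfrac{3}{4}\,x^2/(1-x+x^2)$ from $1/x = \tfrac{1}{2}(1+\sqrt3\cot\beta)$ and then invokes the triplication formula in the polynomial form $\sin^2 3\beta = S(3-4S)^2$, whereas you use the product form $\sin 3\beta = 4\sin\beta\,\sin(\tfrac{\pi}{3}-\beta)\,\sin(\tfrac{\pi}{3}+\beta)$ together with your keystone identity $1-x+x^2 = 3/(4\sin^2(\beta+\tfrac{\pi}{3}))$; these are equivalent routes to the same verification.
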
 

\begin{proof} 
Two increasing bijections are defined by 
$$(0, \tfrac{1}{2} \pi) \to (0, 1) : \alpha \mapsto \sin^2 \alpha = \kk^2 = : \xi$$ 
and 
$$(0, \tfrac{1}{3} \pi) \to (0, 1) : \beta \mapsto 2 \, \frac{\sin \beta}{\sin \beta + \sqrt3 \cos \beta} = k^2 = : x.$$ 
\medbreak
\noindent
In light of the identity revealed just prior to this Theorem, we need only check that when the rescaling $2 \alpha = 3 \beta$ is used to coordinate these bijections, there results the bijection of the Theorem. Thus, let $0 < \beta < \tfrac{1}{3} \pi$: from 
$$\frac{1}{x} = \frac{1}{2} (1 + \sqrt3 \, \cot \beta)$$ 
it follows that 
$$\frac{1}{x^2} - \frac{1}{x} + 1 = \frac{1}{4} ( 3 + 3 \cot^2 \beta) = \frac{3}{4} \csc^2 \beta$$ 
and therefore that 
$$\frac{3}{4} \frac{x^2}{1 - x + x^2} = \sin^2 \beta = : S$$
say; now 
$$3 - 4S = 3 \, \frac{1 - x}{1 - x + x^2}$$ 
so that by trigonometric triplication
$$\frac{27}{4} \, \frac{x^2 (1 - x)^2}{(1 - x + x^2)^3} = S (3 - 4S)^2 = \sin^2 3 \beta$$
\medbreak 
\noindent
and the coordinating rescale gives 
$$\sin^2 3 \beta = \sin^2 2 \alpha$$ 
where by trigonometric duplication 
$$\sin^2 2 \alpha = 4 \sin^2 \alpha \cos^2 \alpha = 4 \xi (1 - \xi).$$ 
\end{proof} 

\medbreak 

We spoke of this hypergeometric identity as being familiar. In fact, it appears as equation (1.5) in [10]; the proof in Section 4 of that paper rests in part on the theory of theta functions. 

\medbreak 

Additional hypergeometric identities follow from this one. Note first that the bijection in Theorem \ref{hyper} is invariant under the simultaneous replacements $x \mapsto 1 - x$ and $\xi \mapsto 1 - \xi$. Now, let $0 < p < 1$: on the one hand, if we put 
$$x = \frac{p(2 + p)}{1 + 2 p} \; \; \; {\rm and} \; \; \; \xi = \frac{27}{4} \frac{p^2 (1 + p)^2}{(1 + p + p^2)^3}$$ 
then 
$$1 - x + x^2 = \Big(\frac{1 + p + p^2}{1 + 2 p}\Big)^2$$ 
and Theorem \ref{hyper} exactly reproduces Theorem 11.1 of [1]; on the other hand, if we make the aforementioned simultaneous replacements 
$$x = \frac{1 - p^2}{1 + 2 p} \; \; \; {\rm and} \; \; \; \xi = \frac{1}{4} \frac{(1 - p)^2 (1 + 2 p)^2 (2 + p)^2}{(1 + p + p^2)^3}$$ 
then $1 - x + x^2$ remains the same and Theorem \ref{hyper} exactly reproduces Corollary 11.2 of [1]. Of course, the proofs of these identities in [1] are rather different, being based instead on judicious manipulations of other hypergeometric identities recorded in the Bateman Manuscript Project [2] (specifically: (42) on page 114 with $a = 1/12$; (32) on page 113 with $a = b = 1/2$; (2) on page 111 with $a = 1/12$ and $b = 5/12$). 

\medbreak

\bigbreak

\begin{center} 
{\small R}{\footnotesize EFERENCES}
\end{center} 
\medbreak 

[1] B.C. Berndt, S. Bhargava, and F.G. Garvan, {\it Ramanujan's theories of elliptic functions to alternative bases}, Transactions of the American Mathematical Society {\bf 347} (1995) 4163-4244. 

\medbreak 

[2] A. Erdelyi (director), {\it Higher Transcendental Functions}, Volume 1, McGraw-Hill (1953). 

\medbreak 

[3] A.G. Greenhill, {\it The Applications of Elliptic Functions}, Macmillan and Company (1892); Dover Publications (1959). 

\medbreak 

[4] P.L. Robinson, {\it Elliptic functions from $F(\frac{1}{3}, \frac{2}{3} ; \frac{1}{2} ; \bullet)$}, arXiv 1907.09938 (2019). 

\medbreak 

[5] P.L. Robinson, {\it Elliptic functions from $F(\tfrac{1}{4}, \tfrac{3}{4}; \tfrac{1}{2} ; \bullet)$}, arXiv 1908.01687 (2019). 

\medbreak 

[6] P.L. Robinson, {\it Nonelliptic functions from $F(\tfrac{1}{6}, \tfrac{5}{6}; \tfrac{1}{2} ; \bullet)$}, arXiv 2004.06529 (2020). 

\medbreak 

[7] P.L. Robinson, {\it The elliptic function ${\rm dn}_3$ of Shen}, arXiv 2008.13572 (2020). 

\medbreak 

[8] P.L. Robinson, {\it The elliptic function ${\rm dn}_2$ of Shen}, arXiv 2009.04910 (2020). 

\medbreak 

[9] Li-Chien Shen, {\it On the theory of elliptic functions based on $_2F_1(\frac{1}{3}, \frac{2}{3} ; \frac{1}{2} ; z)$}, Transactions of the American Mathematical Society {\bf 357}  (2004) 2043-2058. 

\medbreak 

[10] Li-Chien Shen, {\it A note on Ramanujan's identities involving the hypergeometric function $F(\tfrac{1}{6}, \tfrac{5}{6}; 1 ; z)$}, Ramanujan Journal {\bf 30} (2013) 211-222. 

\medbreak 

[11] Li-Chien Shen, {\it On a theory of elliptic functions based on the incomplete integral of the hypergeometric function $_2 F_1 (\frac{1}{4}, \frac{3}{4} ; \frac{1}{2} ; z)$}, Ramanujan Journal {\bf 34} (2014) 209-225. 

\medbreak

\end{document}